\newcommand{\add}[1]{#1}
\newcommand{\RR}{\mathbb{R}}
\newcommand{\NN}{\mathbb{N}}
\theoremstyle{plain}
\newtheorem{theorem}{Theorem}[section]
\newtheorem{proposition}[theorem]{Proposition}
\theoremstyle{definition}
\newtheorem{example}{Example}
\theoremstyle{remark}
\newtheorem*{remark}{Remark}
\title{\bf Inverse optimal control with polynomial optimization}
\begin{document}

\author{Edouard Pauwels$^{1,2}$, Didier Henrion$^{1,2,3}$, Jean-Bernard Lasserre$^{1,2}$}

\footnotetext[1]{CNRS; LAAS; 7 avenue du colonel Roche, F-31400 Toulouse; France.}
\footnotetext[2]{Universit\'e de Toulouse;  LAAS, F-31400 Toulouse, France.}
\footnotetext[3]{Faculty of Electrical Engineering, Czech Technical University in Prague,
Technick\'a 2, CZ-16626 Prague, Czech Republic}

\date{Draft of \today}

\maketitle

\begin{abstract}
In the context of optimal control, we consider the inverse problem of Lagrangian identification given system dynamics and optimal trajectories. Many of its theoretical and practical aspects are still open. Potential applications are very broad as a reliable solution to the problem would provide a powerful modeling tool in many areas of experimental science. We propose to use the Hamilton-Jacobi-Bellman sufficient optimality conditions for the direct problem as a tool for analyzing the inverse problem and propose a general method that attempts at solving it numerically with techniques of polynomial optimization and linear matrix inequalities.
The relevance of the method is illustrated based on simulations on academic examples under various settings.
\end{abstract}

\section{INTRODUCTION}

In brief the {\it Inverse Optimal Control Problem} (IOCP) can be stated as follows.
Given a system dynamics
\[
\dot{x}(t)=f(x(t),u(t))
\]
with possibly state and/or control constraints
\[
x(t)\in X, \:\:u(t)\in U, \:\:t \in [0,T]
\]
and a set of trajectories
\[
(x(t;x_0),u(t;x_0))_{t\in[0,T],\:x_0 \in X}
\]
parametrized by time and initial states, and stored in a database,
the goal is to find a {\it Lagrangian} function
\[
l:X\times U \to\RR
\]
such that all state and control trajectories in the database are {\it optimal trajectories} for the direct Optimal Control Problem
(OCP) with integral  cost
\[
\int_0^Tl(x(t),u(t))dt,
\]
with fixed or free terminal time $T$.

Inverse problems in the context of calculus of variations and control are old topics that arouse a renewal of interest in the context of optimal control, especially in humanoid robotics. Actually, even the well-posedness of the IOCP is an issue and a matter of debate between the robotics and computer science communities.

\subsection{Context}

The problem of variational formulation of differential equations (or the inverse problem of the calculus of variations) dates back to the \textit{19-th} century. The one dimensional case is found in \cite{darboux1972leccons}, historical remarks are found in \cite{douglas1941solution,tonti1984variational}. Necessary and sufficient conditions for the existence and the enumeration of solutions to this problem have been investigated since, see also \cite{Saunders2010} for a survey about recent developments.
Notice that calculus of variations problems correspond to the particular choice of dynamics $f=u$ in the OCP.

Kalman first formulated the inverse problem in the context of linear quadratic regulator (LQR) \cite{kalman1964linear} which triggered research efforts in this realm \cite{anderson1971linear,jameson1973inverse,fujii1984complete}. Departing from the linear case, Hamilton-Jacobi theory is used in \cite{thau1967inverse} to recover quadratic value functions, in \cite{moylan1973nonlinear} to prove existence theorems for a class of inverse control problems, and in \cite{casti1980general} to generalize results obtained for LQR. In a slightly different context, \cite{freeman1996inverse} linked Lyapunov and optimal value functions for optimal stabilization problems. More recently the well-posedness issue was addressed in \cite{nori2004linear} in the context of LQR. Robustness and continuity aspects with respect to Lagrangian variations were investigated in \cite{chittaro2013inverse}, results about well-posedness and experimental requirements were exposed in \cite{ajami2013humans}, both in the context of Dubbins dynamics and strictly convex positive Lagrangians.

On a more practical side, motivated by \cite{arechavaleta2008optimality}, the authors in \cite{mombaur2010human} have proposed an algorithm
based on the ability to solve the direct problem for parametrized Lagrangians and on a
formulation of the IOCP as a (finite-dimensional) nonlinear program. Similar approaches have been proposed in the context of Markov Decision Processes \cite{abbeel2004apprenticeship,ratliff2006maximum}. In  a sense these methods are ``blind'' to the problem structure as
testing optimality at each current candidate solution is performed by solving numerically the associated direct OCP. To further
exploit the problem structure, we can use explicit analytical optimality conditions for the direct OCP. For instance the use of necessary optimality conditions 
for solving numerically the IOCP have already been proposed in recent works. 
In \cite{puydupin2012convex} the direct problem is first discretized and the IOCP
is expressed as a (finite-dimensional) nonlinear program. Then the 
{\it residual} technique of \cite{keshavarz2011imputing} for inverse parametric optimization is applied to the
Karush-Kuhn-Tucker optimality conditions of the nonlinear program to account for optimality of the database trajectories. On the other hand, \cite{hatz2012estimating} proposes to use the maximum principle for kinetic parameter estimation.

Surprisingly, in all the above references, only the seminal theoretical works of \cite{thau1967inverse,moylan1973nonlinear,casti1980general} are based on (or use) the Hamilton-Jacobi-Bellman equation (HJB in short), whereas the HJB provide a well-known sufficient condition for optimality and a perfect practical tool for verification. One reason might be that the HJB is rarely  used for solving the direct OCP and is rather used only as a verification tool.
\subsection{Contribution}
We claim and hope  to convince the reader that the HJB optimality equation (in fact even a certain relaxation of HJB) is a very appropriate tool not only for analyzing but also for  solving the IOCP. Indeed:

(a) The HJB optimality equation provides an almost perfect criterion to express optimality for database trajectories.

(b) The HJB optimality equation (or its relaxation) {\it sheds light}  on the many potential pitfalls related to the IOCP when treated in full generality\add{. Among them, the most concerning issue is the ill-posedness of the inverse problem and the existence of solutions carying very little physical meaning. The HJB condition} can be used as a guide to restrict the 
search space for candidate Lagrangians. 
\add{Previous approaches to deal with this problem, implicitly or explicitly, involve strong constraints on the class of functions among which candidate Lagrangians are searched \cite{mombaur2010human,puydupin2012convex,chittaro2013inverse,ajami2013humans}. This allows, in some cases, to alleviate the ill-posedness issue and to provide theoretical guarantees regarding the possibility to recover a true Lagrangian from the observation of optimal trajectories \cite{chittaro2013inverse,ajami2013humans}.}
\add{Departing from these approaches, the search space restrictions that we propose} stems from either simple relations between the 
candidate Lagrangian and optimal value function associated with the OCP, obvious from the HJB equation, or from purely sparsity biased estimation arguments. We do {\it not} require an {\it a priori} (and always questionable) selection of the ``type'' of candidate Lagrangian (e.g. coming from some ``physical" observations and/or remarks).

(c) Last but not least, a relaxation of the HJB optimality equation can be readily translated into a positivity condition for a certain function on some set. If the vector field $f$ is polynomial, and the state and/or control constraint sets $X$ and $U$ are basic semi-algebraic then a natural strategy is to consider {\it polynomials} as candidate Lagrangians and (approximated) optimal value functions associated with the direct OCP. Within this framework, using powerful positivity certificates {\it \`a la Putinar} to look for an optimal solution of the IOCP reduces to solving a hierarchy of semidefinite programs (SDP), or
linear matrix inequalities (LMI) of increasing size. 
Importantly, a distinguishing feature of this approach is {\it not} to rely on iteratively solving a direct OCP.
Notice that since the 1990s, the availability of reasonably efficient SDP solvers  \cite{vandenberghe1996semidefinite} has strengthened the interest in optimization problems with polynomial data. Examples of applications of such positivity certificates are given in \cite{lasserre2001global} 
and \cite{lasserre2008nonlinear} for direct optimization and optimal control, and in \cite{lasserre2013inverse} for inverse (static) optimization.

The paper is organized as follows. In Section \ref{sec:2} we properly define the IOCP. Next, in section \ref{sec:3}, we present the conceptual ideas based on
HJB theory, polynomial optimization and LMI. We emphasize that these ideas allow to highlight potential pitfalls of the IOCP. A practical implementation of the discussed conceptual method is proposed in section \ref{sec:4}. Finally, in section \ref{sec:5} we first illustrate 
the outputs of the method on a simple one-dimensional example 
and then provide promising results on more complex academic examples.
\section{PROBLEM FORMULATION}
\label{sec:2}
\subsection{Notations}
If $A$ is a topological vector space, $\mathcal{C}(A)$ represents the set of continuous functions from $A$ to $\RR$ and $\mathcal{C}^1(A)$ represents the set of continuously differentiable functions from $A$ to $\RR$. Let $X \subseteq \RR^{d_X}$ denote the state space and $U \subseteq \RR^{d_U}$ denote the control space which are supposed to be compact subsets of Euclidean spaces. System dynamics are represented by a continuously differentiable vector field
$f \in \mathcal{C}^1(X \times U)^{d_X}$
which is supposed to be known. Terminal state constraints are represented by a set $X_T \subseteq X$ which is also given.
Let $B_n$ denote the unit ball of the Euclidean norm in $\RR^n$, 
and let $\partial X$ denote the boundary of set $X$.

\subsection{Direct OCP}

Given a Lagrangian
\begin{align*}
	l_0 \in \mathcal{C}(X \times U),
\end{align*}
we consider a direct OCP of the form:
\begin{equation}\label{eq:pdirect}
	\tag{OCP}
\begin{array}{ll@{\;}l@{}}
	v_0(t,z) :=&  \displaystyle\inf_{u} & \displaystyle\int_t^T l_0(x(s), u(s)) ds \\
	& \mathrm{s.t.} & \dot{x}(s) = f(x(s), u(s)), \\
           && x(s) \in X,\,u(s) \in U,\, s \in [t,T],\\
	&&x(t) = z, \,x(T) \in X_T
\end{array}
\end{equation}
with final time $T \geq t$. In this formulation, the final time $T$ might be given or free, in which case it is a variable of the problem and the {\it value function} $v_0$ does not depend on $t$. For the rest of this paper, direct problems of the form of (\ref{eq:pdirect}) are denoted as
\begin{align*}
	\text{OCP}(l_0,z).
\end{align*}
\begin{remark}
	More general problem classes could be considered. Indeed, there is no terminal cost and the problem is stationary. Further comments are found in next sections.
\end{remark}
\begin{remark}
	The infimum in (\ref{eq:pdirect}) might not be attained.
\end{remark}

\subsection{IOCP}

The inverse problem consists of recovering the Lagrangian of the direct OCP based on:
\begin{itemize}
	\item knowledge of the dynamics $f$ as well as state and/or control constraint sets $X,U,X_T$, 
	\item observation of optimal trajectories stored in a database indexed by some index set $I$ \end{itemize}
\begin{align*}
	\mathcal{D} = \{(t_i, x_i, u_i)\}_{i \in I} \in ([0,T] \times X \times U)^I.
\end{align*}
\begin{remark}
	For complete trajectories, the index set $I$ is a continuum. For real world data, it is  a finite set.
\end{remark}
More specifically, the inverse problem consists of finding ${l} \in \mathcal{C}(X \times  U)$ such that $\mathcal{D}$ is a subset of optimal trajectories of problem $\text{OCP}({l}, x_i(t))$ for all $t \in [0,T]$. A solution of this problem would be an operator
mapping back $\mathcal{D}$ to $l$
such that the input data satisfy optimality conditions for problem $\text{OCP}({l},x_i(t))$. 

\section{HAMILTON-JACOBI-BELLMAN FOR THE INVERSE PROBLEM}
\label{sec:3}

\subsection{Hamilton-Jacobi-Bellman theory}

For the rest of this paper, we define the following
linear operator acting on Lagrangians and value functions:
\[
\mathcal{L} \colon (l, v) \to l + \frac{\partial v}{\partial t} + \frac{\partial v}{\partial x}^T f.
\]
A well-known sufficient condition for optimality in problem (\ref{eq:pdirect}) follows from
Hamilton-Jacobi-Bellman (HJB) theory, see e.g. \cite{athans1966optimal,bardi2008optimal}.

\begin{proposition}\label{prop-1}
Suppose that there exist state and control trajectories $(x_0(s),u_0(s))_{s\in[t,T]}$ such that
\begin{align*}\label{eq:dynCstr}\tag{A1}
\dot{x}_0(s)= f(x_0(s),u_0(s)), \\
x_0(s) \in X,\: u_0(s) \in U,\, s \in [t, T],\\
 x_0(t) = z, \:\:x_0(T) \in X_T.
\end{align*}
Suppose in addition that there exists a function $v_0 \in \mathcal{C}^1([t,T] \times X )$ such that
\begin{align}
\label{aux1} \mathcal{L}(l_0,v_0)(s,x,u)\geq  0,\, \forall (s,x,u)\in [t,T]\times X\times U,\\ 
\label{aux3} v_0(T,x) = 0,\,\forall x \in X_T,\\
\label{aux2} \mathcal{L}(l_0,v_0)(s,x_0(s), u_0(s)) = 0,\, \forall s \in [t, T].
\end{align}
Then the state and control trajectories $(x_0(s),u_0(s))_{s\in[t,T]}$ are optimal solutions of the direct problem (\ref{eq:pdirect}).
\end{proposition}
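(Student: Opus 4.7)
The plan is to follow the standard HJB verification argument: use the nonnegativity condition (\ref{aux1}) to get a lower bound on the cost of any admissible trajectory, then use the equality condition (\ref{aux2}) to show that $(x_0,u_0)$ achieves this bound. The central identity is that, along any admissible trajectory $(x,u)$ emanating from $(t,z)$, the inner two terms of $\mathcal{L}(l_0,v_0)(s,x(s),u(s))$ assemble into the total derivative of $s \mapsto v_0(s,x(s))$, because $\dot x(s)=f(x(s),u(s))$ by the dynamics.

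First, I would fix an arbitrary admissible $(x,u)$ with $x(t)=z$ and $x(T)\in X_T$, and integrate $\mathcal{L}(l_0,v_0)(s,x(s),u(s))$ from $t$ to $T$. Applying the chain rule to $v_0(s,x(s)) \in \mathcal{C}^1$ and then the fundamental theorem of calculus yields
\begin{equation*}
\int_t^T \mathcal{L}(l_0,v_0)(s,x(s),u(s))\,ds \;=\; \int_t^T l_0(x(s),u(s))\,ds \;+\; v_0(T,x(T)) \;-\; v_0(t,z).
\end{equation*}
Condition (\ref{aux3}) together with $x(T)\in X_T$ kills the terminal value, and (\ref{aux1}) forces the left-hand side to be nonnegative, giving the lower bound $\int_t^T l_0(x(s),u(s))\,ds \geq v_0(t,z)$ valid for every admissible trajectory.

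Second, I would repeat the same computation along the candidate trajectory $(x_0,u_0)$ from assumption (\ref{eq:dynCstr}). This time (\ref{aux2}) replaces the inequality by equality in the pointwise integrand, so the same identity produces $\int_t^T l_0(x_0(s),u_0(s))\,ds = v_0(t,z)$. Comparing with the lower bound established above shows that $(x_0,u_0)$ attains the infimum in $\mathrm{OCP}(l_0,z)$, which is the desired optimality.

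The argument involves no real obstacle; the only subtlety to check carefully is the application of the chain rule, which requires $v_0 \in \mathcal{C}^1([t,T]\times X)$ (given) and absolute continuity of $s\mapsto x(s)$ (ensured by the differential equation with $f$ continuous and $u$ measurable). As a side remark, the proof will also reveal that $v_0(t,z)$ coincides with the value of $\mathrm{OCP}(l_0,z)$ whenever the hypotheses of Proposition~\ref{prop-1} hold at $(t,z)$, which is the property later exploited for the inverse problem.
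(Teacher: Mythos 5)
Your proof is correct and follows exactly the same verification argument as the paper: integrate $\mathcal{L}(l_0,v_0)$ along an arbitrary feasible trajectory, use the chain rule and (\ref{aux3}) to obtain the lower bound $\int_t^T l_0\,ds \geq v_0(t,z)$ from (\ref{aux1}), then use (\ref{aux2}) to show the candidate trajectory attains it. The paper states this in one sentence; you simply spell out the intermediate identity.
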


	\begin{proof}
	Recall that $X,U$ are compact. So integrating (\ref{aux1}) along {\it any} feasible state and control
trajectories $(x(s),u(s))$ with initial state $z$ at time $t$, and using (\ref{aux3}) yields $\int_t^Tl_0(x(s),u(s))ds\geq v_0(t,z)$ whereas using (\ref{aux2}) one has
$\int_t^Tl_0(x_0(s),u_0(s))ds= v_0(t,z)$. \end{proof}

	Observe that (\ref{aux1})-(\ref{aux3}) are just a {\it relaxation} of the HJB equation and from Proposition \ref{prop-1}
(\ref{aux1})-(\ref{aux2}) provide  a certificate of optimality of the proposed trajectory. Additional assumptions on problem structure are required to make this condition necessary, see \textit{e.g.} \cite{athans1966optimal}. Moreover, for most direct problems these conditions can not be met in the usual sense and {\it viscosity solutions} are needed, see \textit{e.g.} \cite{bardi2008optimal}. Our approach is based on a classical interpretation as well as a relaxation of these sufficient conditions alone.

\begin{remark}
In the free terminal time setting, the conditions (\ref{aux1})-(\ref{aux3}) can be simplified because $v_0$ does not depend on the initial time $t$ any more.
\end{remark}

\subsection{Inverse problem: main idea}
We use HJB relations to characterize some approximate solutions of the inverse problem. \add{The idea is based on the following
weakening of Proposition \ref{prop-1}.
\begin{proposition}
\label{prop-2}
 Let $(x_0(s), u_0(s))_{s \in [t, T]}$, with  $x_0(t)=z$,  be such that (\ref{eq:dynCstr}) is satisfied. Suppose that 
 there exist a real $\epsilon$ and functions $l \in \mathcal{C}(X\times U)$, $v \in \mathcal{C}^1([0,T]\times X)$ such that
\begin{align}
\mathcal{L}({l},{v})(s,x,u) \geq 0,\, \forall(s,x,u)\in[t,T]\times X\times U,  \label{eq:conicCstr}\\
v(T,x)=0,\, \forall x \in X_T,\\
\int_t^T \mathcal{L}({l},{v})(s,x_0(s),u_0(s)) ds \leq \epsilon. \label{eq:fitCstr}
\end{align}
Then, the input trajectory $(x_0(s), u_0(s))_{s \in [t, T]}$ is $\epsilon$-optimal for problem $\text{OCP}({l},z)$. 
\end{proposition}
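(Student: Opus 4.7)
The plan is to mirror the proof of Proposition \ref{prop-1} almost verbatim, replacing the pointwise identity (\ref{aux2}) with its integrated $\epsilon$-version (\ref{eq:fitCstr}) and observing that the $\epsilon$ propagates transparently through the same integration step. The key computation remains the chain rule identity $\frac{d}{ds}v(s,x(s)) = \frac{\partial v}{\partial t}(s,x(s)) + \frac{\partial v}{\partial x}(s,x(s))^T f(x(s),u(s))$, which rewrites $\mathcal{L}(l,v)(s,x(s),u(s))$ as $l(x(s),u(s)) + \frac{d}{ds}v(s,x(s))$ along any absolutely continuous feasible trajectory.

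First I would fix an arbitrary feasible trajectory $(x(s),u(s))$ for $\text{OCP}(l,z)$ and integrate (\ref{eq:conicCstr}) from $t$ to $T$. The chain rule identity collapses the integral into $\int_t^T l(x(s),u(s))\,ds + v(T,x(T)) - v(t,z) \geq 0$. Since $x(T) \in X_T$ and $v(T,\cdot) = 0$ on $X_T$, this yields the lower bound $\int_t^T l(x(s),u(s))\,ds \geq v(t,z)$. Applying the same identity along $(x_0,u_0)$ turns hypothesis (\ref{eq:fitCstr}) into $\int_t^T l(x_0(s),u_0(s))\,ds + v(T,x_0(T)) - v(t,z) \leq \epsilon$; again $x_0(T) \in X_T$ by (\ref{eq:dynCstr}) forces $v(T,x_0(T)) = 0$, so $\int_t^T l(x_0(s),u_0(s))\,ds \leq v(t,z) + \epsilon$. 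Subtracting the two inequalities gives $\int_t^T l(x_0,u_0)\,ds \leq \int_t^T l(x,u)\,ds + \epsilon$ for every feasible $(x,u)$, which is precisely the definition of $\epsilon$-optimality.

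There is no substantive obstacle here: once one notices that the proof of Proposition \ref{prop-1} factors through integration of $\mathcal{L}(l,v)$ along trajectories, the $\epsilon$-relaxation slots in without effort. The only technicality is that the chain rule applies and the integrals are well-defined, which is ensured by the compactness of $X$ and $U$ together with the $\mathcal{C}^1$-regularity of $v$ and continuity of $l$ and $f$ — exactly the regularity hypotheses already in force from Section \ref{sec:2}.
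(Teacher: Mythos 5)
Your proof is correct and follows exactly the paper's argument: the paper likewise integrates the positivity constraint (\ref{eq:conicCstr}) along an arbitrary feasible trajectory to establish that $v(t,z)$ lower-bounds the value function, then uses (\ref{eq:fitCstr}) together with the terminal condition to conclude $\int_t^T l(x_0(s),u_0(s))\,ds \leq v(t,z)+\epsilon$. Your write-up merely makes explicit the chain-rule computation that the paper compresses into ``proceeding as in the proof of Proposition \ref{prop-1}.''
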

\begin{proof}
Proceeding as in the proof of Proposition \ref{prop-1}, ${v}(t,z)$ is a lower bound on the value function of problem $\text{OCP}({l},z)$ and from the last linear constraint (\ref{eq:fitCstr}) we deduce that
\begin{align*}
	\int_t^T {l}(x_0(s), u_0(s))ds \leq {v}(t,z) + \epsilon.
\end{align*}
\end{proof}
Proposition \ref{prop-2} can be easily extended to the case of multiple trajectories.} The main advantage is that we have a certificate of $\epsilon$-optimality. Observe that $0$-optimality is in principle impossible to obtain because the optimal value function $v$ of a direct OCP is in general non-differentiable; however as soon as $v$ is continuous then by the Stone-Weierstrass Theorem $v$ can be approximated on the compact $[t,T]\times X$ as closely as desired by a polynomial and so $\epsilon$-optimality is indeed achievable for arbitrary $\epsilon>0$.
\begin{remark}
	\add{The conditions (\ref{eq:conicCstr})-(\ref{eq:fitCstr})} do not ensure that the infimum of the direct problem
	\add{with cost ${l}(x,u)$} is attained. However the fact that a trajectory is given ensures that it is feasible.
\end{remark}
\subsection{Multiple and trivial solutions}
\add{The construction of multiple solutions to the inverse problem is trivial given the tools of Proposition \ref{prop-2}. Consider the free terminal time setting and suppose that the pair $({l}, {v})$ satisfies conditions (\ref{eq:conicCstr})-(\ref{eq:fitCstr}) for some $\epsilon > 0$. Take a differentiable $\tilde{v}$ such that $\tilde{v}(T,x) = 0$ for $x \in X_T$. Then the pair $({l} - f^T {v}, {v} + \tilde{v})$ satisfies constraints (\ref{eq:conicCstr})-(\ref{eq:fitCstr}) for the same $\epsilon$. The possibility to construct such solutions stems from the existence of trivial solutions to the problem.}

As we already mentioned, the constraint (\ref{eq:conicCstr}) is \add{positively} homogeneous in $({l},{v})$ and therefore, \add{the pair $(0,0)$ is always feasible}. \add{In other words the trivial cost ${l} = 0$ is always an optimal} solution of the inverse problem, independently of input trajectories. \add{But the well-posedness issue} is even worse than this. Consider a pair of functions $({l},{v})$ such that $\mathcal{L}({l}, {v}) = 0$ on the domain of interest, then any feasible trajectory of the direct problem will be optimal for ${l}$. 
\begin{example}
	Consider the following one dimensional free terminal time direct setting
	\begin{align*}
		X = U = B_1, \,X_T = \{0\}, \, f(x, u) = u.
	\end{align*}
The pair of functions $({l}, {v}) = (-2xu, x^2)$ satisfies constraint (\ref{eq:conicCstr}) and $\mathcal{L}({l}, {v}) = 0$. Any feasible trajectory, $(x_0(s), u_0(s))_{s \in [t, T]}$ with $t < T$, $x_0(t)=z$, and such that (\ref{eq:dynCstr}) is satisfied, is optimal for $\text{OCP}({l},z)$. Indeed
	\begin{align*}
		\int_t^T {l}(x_0(s), u_0(s))ds = \int_t^T \frac{d}{dt}(-x_0(s)^2)ds = x_0(t)^2.
	\end{align*}
\end{example}
Such pairs are solutions of the IOCP in the sense that was proposed in the previous section. However, these solutions do not have any physical interpretation, because they do not depend on input trajectories. In addition, because the solutions of the IOCP form a {\it convex cone}, \add{the existence of such solutions allows to construct multiple solutions to the IOPC}. To avoid this, one possibility is to include an additional normalizing constraint of the form:
\begin{align}
	\mathcal{A}(\mathcal{L}({l}, {v})) = 1
	\label{eq:normCstr}
\end{align}
for some linear functional $\mathcal A$ on the space of continuous functions. This can be viewed as a search space reduction as we intersect the {cone} defined by (\ref{eq:conicCstr}) with an affine space. 
\begin{remark}
	As we already mentioned, we do not consider terminal cost and limit ourselves to stationary problems. This setting was chosen to avoid more ill-posedness of the same type and keep the presentation clear.
\end{remark}
\begin{remark}
	\add{Previous practical methods \cite{mombaur2010human,puydupin2012convex} and theoretical work \cite{nori2004linear,chittaro2013inverse,ajami2013humans} include, implicitly or explicitly, constraints of the form of (\ref{eq:normCstr}) but only enforce them on the candidate Lagrangian ${l}$.}
\end{remark}

\subsection{Considering multiple trajectories}

Considering a single trajectory as input for the IOCP leads to Lagrangians that enforce closedness to this trajectory, which may have little physical meaning.
\begin{example}
	Consider the direct problem with free terminal time
	\begin{align*}
		X = U = B_2, \,X_T = \{0\}, \, f(x, u) = u, \, l(x,u) = 1.
	\end{align*}
	The optimal control  is $u(x) = \frac{-x}{||x||_2}$ and the optimal value function is $v(x) = ||x||_2$. Consider the trajectory
	\begin{align*}
		(x_0(s), u_0(s))_{s \in[0, 1]} = ((s - 1, 0), (1, 0))_{s \in[0, 1]}.
	\end{align*}
This trajectory is optimal for $\text{OCP}(l,(-1,0))$ but it is also optimal for $\text{OCP}(||u - (1,0)||_2^2,(-1,0))$. However, the second Lagrangian only captures a constant of the particular trajectory considered.
\end{example}

\subsection{Effect of discretization}

In practical settings, one rarely has access to complete trajectories. Typical experiments produce discrete samples from trajectories and possibly with additional experimental noise. The database consists of a set of $n \in \NN$ points $\{(t_i,x_i, u_i)\}_{i = 1 \ldots n}$. In this case, we replace the integral in (\ref{eq:fitCstr}) by a discrete sum:
\begin{align}
	\frac{1}{n}\sum_{i=1}^n \mathcal{L}({l},{v})(t_i,x_i,u_i) \leq \epsilon.
	\label{eq:fitDiscCstr}
\end{align}
In this setting, it is generically possible to find Lagrangians that satisfy constraints (\ref{eq:conicCstr}) and (\ref{eq:fitCstr}) with $\epsilon = 0$.
\begin{example}
	Consider, in a free terminal time setting a discrete dataset $\{(x_i, u_i)\}_{i=1 \ldots n}$. The pair of functions $\left((x, u) \to \prod_i||x_i -x||^2 ||u_i - u||^2, 0\right)$ satisfies constraint (\ref{eq:conicCstr}) and (\ref{eq:fitDiscCstr}) with $\epsilon = 0$.
\end{example}
Because of experimental noise and random perturbations of the input trajectories, one can find Lagrangians which fit tightly a particular sample of trajectories. This does not give much insight about the true nature of the original trajectories. Furthermore, the proposed Lagrangian may be far from optimal if the database was fed up with a different sample of trajectories.  In statistics this well-known phenomenon bears the name of \textit{overfitting} (see \textit{e.g.} \cite{babyak2004you} for a nontechnical introduction and \cite{vapnik1999overview} for an overview of the mechanisms it involves). A common approach to avoid overfitting is to introduce biases in the estimation procedure with search space restrictions or regularization terms. We adopt such a strategy in the practical method described in the next section.
\begin{remark}
	It is clear that this discretization, although intuitive, arises many questions regarding the effect of noise and of sample size in practical IOCP, rarely discussed and even mentioned in previous works. These theoretical considerations are not specific to the method we propose. They are beyond the scope of this paper and  constitute a strong motivation for future research work.
\end{remark}

\section{A PRACTICAL IMPLEMENTATION}
\label{sec:4}

We have seen that the HJB theory is very useful to address well-posedness issues regarding the inverse problem. In full generality the HJB equations (or their relaxations) are computationally intractable. On the other hand, in a polynomial and semi-algebraic context, some powerful positivity certificates from real algebraic geometry permit to translate the relaxation (\ref{eq:conicCstr}) of the HJB equations into an appropriate LMI hierarchy, hence amenable to practical computation.
Therefore, in the sequel, we assume that $f$ is a polynomial and $X$, $U$ and $X_T$ are basic semi-algebraic sets. Recall that $G \subset \RR^d$ is basic semi-algebraic whenever there exists polynomials $\{g_i\}_{i=1 \ldots m}$ such that 
$G = \left\{ x:\, g_i(x) \geq 0, i = 1 \ldots m \right\}$.
In addition, we assume that the input database  is indexed by a finite set:
$\mathcal{D} = \{(t_i, x_i, u_i)\}_{i=1,\ldots,n}$.

\subsection{Problem formulation}
We consider the following program
\begin{equation}\tag{IOCP}\label{eq:primal1}
\begin{array}{l@{\;}l}
\displaystyle \inf_{l, v, \epsilon} &  \epsilon + \lambda ||l||_1\\
\mathrm{s.t.}	& \mathcal{L}(l, v)(t,x,u) \geq 0, \:\:\forall (t,x,u)\in[0,T]\times X\times U, \\
& v(T,x) = 0, \:\:\forall x \in X_T, \\
& \frac{1}{n} \displaystyle\sum_{i=1}^n\mathcal{L}(l, v)(t_i,x_i,u_i) \leq \epsilon, \\
& \mathcal{A}(\mathcal{L}(l, v)) = 1
\end{array}
\end{equation}
where $l$ and $v$ are polynomials, $\epsilon$ is a real, $\lambda>0$ is a given regularization parameter, and $||.||_1$ denotes the $\ell_1$ norm of a polynomial, \textit{i.e.} the sum of absolute values of its coefficients when expanded in the monomial basis. The first two constraints come from the \add{relaxation (\ref{eq:conicCstr}) of the HJB equations while the third constraint comes from the {\it fit} constraint (\ref{eq:fitDiscCstr}). Finally, the last affine constraint is meant to avoid the trivial solutions that satisfy  $\mathcal{L}(l, v) = 0$}. The $\ell_1$ norm is not differentiable around sparse vectors (with entries equal to zero) and  has therefore a sparsity promoting role which allows to bias \add{solutions of the problem toward Lagrangians with  few nonzero coefficients. This regularization affects the problem well-posedness and will prove to be essential in numerical experiments.}
\subsection{Implementation details}
Linear constraints are easily expressed in term of polynomial coefficients. A classical lifting allows to express the $\ell_1$ norm minimization as a linear program. The normalization \add{functional $\theta$ is chosen}  to be an integral over a box contained in $S$. \add{To express nonnegativity of polynomials over a compact basic semi-algebraic set of the form
$G = \{x: \,g_i(x) \geq 0,\:i=1,\ldots,m\}$,
 we invoke powerful positivity certificates from real algebraic geometry. Indeed, 
 if a polynomial $p$ is positive on $G$ then by Putinar's Positivstellensatz \cite{putinar1993positive} it can be written as 
\begin{align}
	\label{eq:sosCstr}
	p &= p_0 + \sum_{i=1}^m g_i p_i, \quad p_i \in \Sigma^2,\quad i=1,\ldots,m
\end{align}
where $\Sigma^2$ denotes the set of sum of squares (SOS) polynomials. Hence (\ref{eq:sosCstr})  
provides a useful certificate that $p$ is nonnegative on $G$. Moreover, as membership to $\Sigma^2$ reduces to semidefinite programming,
the constraint (\ref{eq:sosCstr}) is easily expressed as an LMI
whose size depends on the degree bound allowed for the SOS polynomials $p_i$ in (\ref{eq:sosCstr}). Therefore, replacing the positivity constraint in (\ref{eq:primal1}) with the constraint (\ref{eq:sosCstr}) allows to express problem (\ref{eq:primal1}) as a hierarchy of LMI problems \cite{vandenberghe1996semidefinite} indexed by the degree bounds on the SOS in (\ref{eq:sosCstr}).
Thus each LMI of the hierarchy can be solved efficiently (of course up to some size limitations)}. We use the SOS module  of the \texttt{YALMIP} toolbox \cite{lofberg2009pre} to manipulate and express polynomial constraints at a high level in \texttt{MATLAB}.

\section{NUMERICAL EXPERIMENTS}
\label{sec:5}
\subsection{General setting}
In our numerical experiments we considered several direct problems of the same form as (\ref{eq:pdirect}). That is, we give ourselves compact sets $X$, $U$, $X_T$, the dynamics $f$, and a Lagrangian $l_0$. We take known examples for which the optimal control law can be computed. Given these, we generate randomly $n$ data points $\mathcal{D} = \{(t_i,x_i, u_i)\}_{i=1 \ldots n}$ in the domain, such that $u_i$ is the optimal control value at point $x_i$ and time $t_i$. For a given value of $\lambda$, we compute a solution ${l}$ of problem (\ref{eq:primal1}). We can then measure how ${l}$ is close to $l_0$. Note that in some simulations, the control is corrupted with noise.
\subsection{Benchmark direct problems}
\subsubsection{Minimum exit time in dimension 1}
\label{sec:pb1}
For this problem we take
\begin{align*}
	X = U =B_1, \, X_T = \partial X, \, l_0= 1, \, f = u.	
\end{align*}
The optimal law for this problem is $u = \text{sign}(x)$ and the value function is $v_0(x) = 1 - |x|$.
\subsubsection{Minimum exit time in dimension 2}
\label{sec:pb2}
For this problem we take
\begin{align*}
	X = U = B_2, \, X_T = \partial X, \, l_0= 1, \, f = u.	
\end{align*}
The optimal law for this problem is $u = \frac{x}{||x||_2}$ and the value function is $v_0(x) = 1 - ||x||_2$.
\subsubsection{Minimum exit norm in dimension 2}
\label{sec:pb3}
For this problem we take
\begin{align*}
	X = U = B_2, \, X_T =  \partial X, \\
	l_0= ||x||_2^2 + ||u||_2^2, \, f = u.	
\end{align*}
The optimal law for this problem is $u = x$ and the value function is $v_0(x) = 1 - ||x||_2^2$.
\subsubsection{Fixed time double integrator with quadratic cost}
\label{sec:pb4}
For this problem we take
\begin{align*}
	X &= B_2, \, U = [-r,r], \, T=1,\\
	l_0&= x^T \left(  
	\begin{array}{cc}
		2 &\frac{1}{4}\\
		\frac{1}{4} &1
	\end{array}
	\right)x + u^2,\\
	f &= 
	\left(  \begin{array}{c c}
		0 & 1\\
		0 & 0
	\end{array} \right) x
	+ 
	\left(  \begin{array}{c}
		0 \\
		1 
	\end{array} \right) u\\
\end{align*}
for a big enough value of $r$. This is an LQR problem, the optimal control is of the form $u(t) = -K(t)x(t)$ where $K(t)$ is obtained by solving the corresponding Riccati differential equation.

\subsection{Numerical results}

\begin{figure}[h!]
	\centering
	\includegraphics[width=0.8\textwidth]{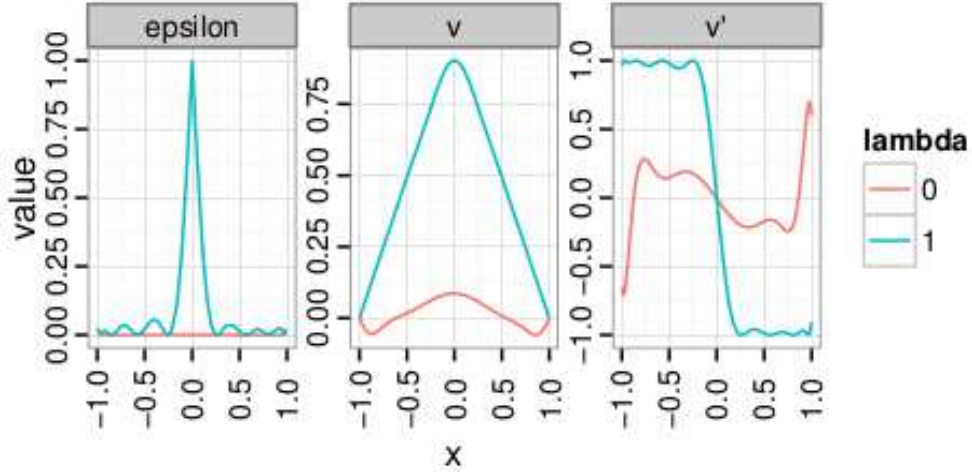}
	\caption{Solution for the one dimensional minimum exit time problem, effect of the regularization parameter $\lambda$. The first column is the distribution of the error $\epsilon$, the second is a representation of the value function ${v}$ and the third column is a representation of its derivative for solutions of problem (\ref{eq:primal1}) with and without regularization. We take 100 points on the segment. Lagrangian ${l}$ and  value function ${v}$ are both polynomials of degree 16.}
	\label{fig:oneDexit}
\end{figure}

\subsubsection{Illustration on a one dimensonal problem}
We consider the one dimensional minimum exit time problem in \ref{sec:pb1}. The results are presented in Figure \ref{fig:oneDexit}. We compare the output of (\ref{eq:primal1}) with ($\lambda = 1$) and without ($\lambda = 0)$ regularization. The main comments are
as follows.
\begin{itemize}
	\item Given any symmetric differentiable concave function ${v}$ vanishing on $\{-1, 1\}$, the pair $({l} = |{v}'|, {v})$ solves problem (\ref{eq:primal1}) with $\epsilon = 0$. 
	\item  Given any polynomial ${v}$ vanishing on $\{-1, 1\}$, and any positive polynomial $p$ on $[0,T]\times X\times U$,
the pair $({l} = (1 - u^2) p - u {v}', {v})$ solves problem (\ref{eq:primal1}) with $\epsilon = 0$. Note that this solution only captures the fact that $|u| = 1$.
	\item Any convex combination of solutions of the types mentioned above also solve problem (\ref{eq:primal1}). It is therefore very hard, in the absence of regularization ($\lambda = 0$), to recover the true Lagrangian.
	\item The sparsity inducing effect of $\ell_1$-norm regularization allows to recover the true Lagrangian ($\lambda = 1$).
	\item The value function associated to this Lagrangian is not smooth around the origin and therefore hard to approximate, hence the value of the error is high.
\end{itemize}
\begin{figure}[t]
	\centering
	\includegraphics[width=0.8\textwidth]{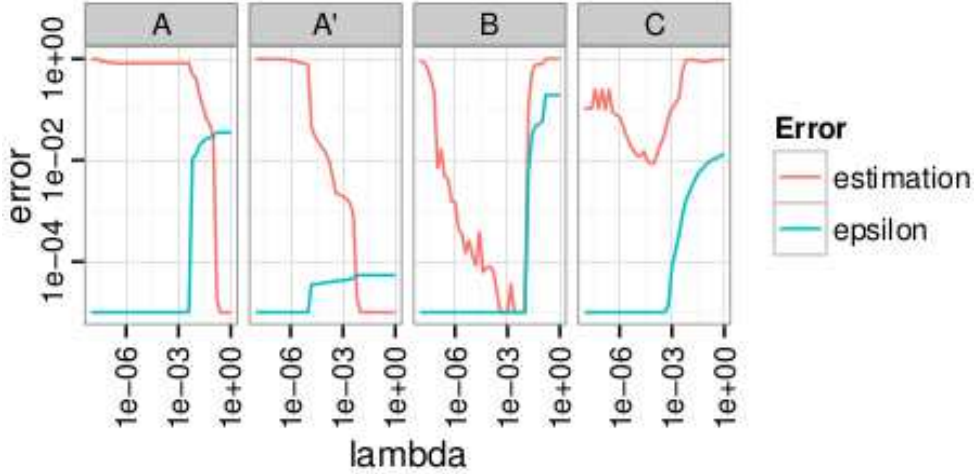}
	\caption{Deterministic setting in dimension 2, error versus variation of regularization parameter $\lambda$. A: minimum exit time. A': minimum exit time sampling far from the origin. B: minimum exit norm. C: double integrator. Estimation error is the metric defined in (\ref{eq:metric}). Epsilon error is the value of the term $\epsilon$ in program (\ref{eq:primal1}). For A, A' and B, we take 20 random points and corresponding control value. For C, we take 50 random points and time with corresponding control. For all problems, Lagrangian ${l}$ is a polynomial of degree 4 and value function ${v}$ is a polynomial of degree 10.}
	\label{fig:deterministic}
\end{figure}
\subsubsection{Lagrangian estimation in dimension 2}
We consider the following settings
\begin{itemize}
	\item[A] Problem \ref{sec:pb2} with $\{x_i\}$ sampled on $B_2$.
	\item[A'] Problem \ref{sec:pb2} with $\{x_i\}$ sampled on $B_2 \setminus \frac{1}{2}B_2$.
	\item[B] Problem \ref{sec:pb3} with $\{x_i\}$ sampled on $B_2$.
	\item[C] Problem \ref{sec:pb4} with $\{x_i\}$ sampled uniformly on the unit $\ell_\infty$ ball.
\end{itemize}
In all cases, ${l}$ has degree $4$ and ${v}$ has degree $10$.
\paragraph{Estimation error}
Since the inverse problem is positively homogeneous, our objective is to recover a Lagrangian $l_0$, up to a positive multiplicative factor. Since we use polynomials, the Lagrangians we estimate can be represented as vectors in the monomial basis. We use the following metric to account for the estimation error:
\begin{align}
	\inf_\alpha \frac{||l_0 - \alpha {l}||_2}{||l_0||_2}=\left(1 - \frac{\left\langle l_0, {l} \right\rangle^2}{||l_0||_2^2||{l}||_2^2}\right)^\frac{1}{2}
	\label{eq:metric}
\end{align}
where the scalar product of polynomials is defined as the usual scalar product of their vectors of coefficients
in the monomial basis.
\begin{figure}[t]
	\centering
	\includegraphics[width=0.8\textwidth]{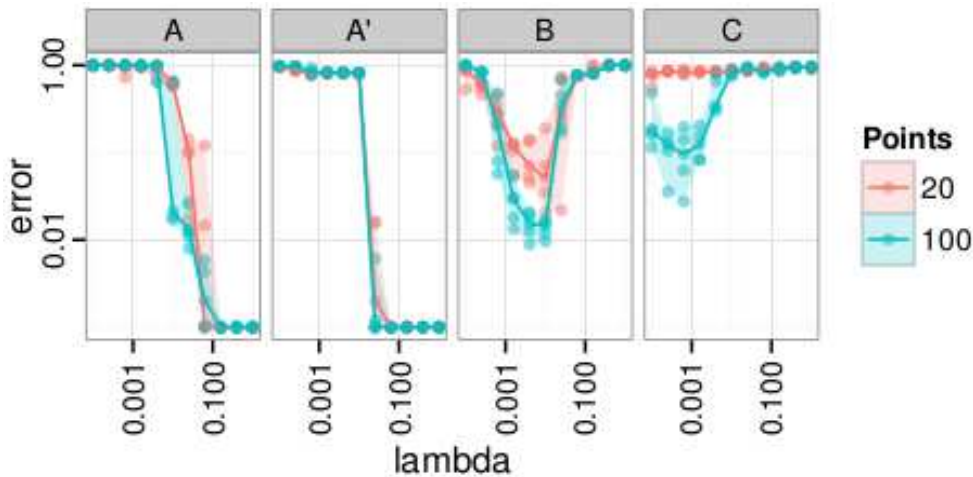}
	\caption{Stochastic setting in dimension 2, estimation error versus variation of regularization parameter $\lambda$. A: minimum exit time. A': minimum exit time sampling far from the origin. B: minimum exit norm. C: double integrator. Estimation error is the metric defined in (\ref{eq:metric}). We vary the number of random points. The control is corrupted by uniform noise and we repeat the experiment five times for each value of $\lambda$. For all problems, Lagrangian ${l}$ is a polynomial of degree 4 and value function ${v}$ is a polynomial of degree 10.}
	\label{fig:noise}
\end{figure}
\paragraph{Deterministic setting}
The results for the four problems are presented in Figure \ref{fig:deterministic}. For all problems, ${l}$ is of degree 4. Therefore, for problems A, A' and B, ${l}$ is represented by a 70-dimensional vector and for problem $C$, it is a 35-dimensional vector. When the estimation error is close to 1, we estimate a Lagrangian ${l}$ that is orthogonal to $l_0$ in the monomial basis. For all problems we are able to recover the true Lagrangian with good accuracy for some value of the regularization parameter $\lambda$. In the absence of regularization, we do not recover the true Lagrangian at all. This highlights the important role of $\ell_1$ regularization which allows to bias the estimation toward {\it sparse polynomials}. When the estimation error is minimal, the value of $\epsilon$ is reasonably low, depending on how the value function can be approximated by a polynomial. For example, A' shows lower $\epsilon$ value because we avoid sampling database points close to the nondifferentiable point of the true value function. In example C, the value function is not a polynomial and therefore harder to approximate. The estimation accuracy is still very reasonable.
\paragraph{Stochastic setting}
The results for the four problems are presented in Figure \ref{fig:noise}. The setting is similar to the deterministic case of the previous paragraph, except that we add uniform noise (of maximum magnitude $10^{-1}$) to the control input. Therefore, the problem is harder than the one presented in the previous paragraph. Several samples and noise realizations are considered to highlight the global trends. These simulations show that despite the stochastic corruption of the input control, we are still able to recover the true Lagrangian with reasonable accuracy. As one could expect, increasing the number of datapoints allows to recover the true Lagrangian with a better accuracy.

\section{CONCLUSIONS AND FUTURE WORKS}

We have presented how Hamilton-Jacobi-Bellman sufficient condition can be used to analyse the inverse problem of optimal control and proposed a practical method based on polynomial optimization and linear matrix inequality hierarchies to provide a candidate solution to this problem. Numerical results suggest that the method is able to estimate accurately Lagrangians comming from various optimal control problems. 

For the specific examples proposed, the optimality conditions allow to highlight many sources of ill-posedness for the inverse problem. In addition to a relaxation of the optimality conditions, we added a constraint and a penalization to circumvent ill-posedness. Numerical simulations support the idea that these are essential to estimate a Lagrangian accuartely. We do not rely on strong bias toward specific candidate Lagrangians and are able to perform accurate estimation in many different settings using the same regularization technique. A natural question that arises here is to find necessary conditions under which this is possible. 

The motivation for the use of Hamilton-Jacobi-Bellman optimality condition is the guarantees it provides when one has access to complete noiseless trajectories. However, practical experimental settings require to consider the effects of discretization and additive noise.
Along these lines, consistency and asymptotic properties of the proposed estimation procedure with respect to random discretization and noise are natural questions.

Finally, it is necessary to carry out further experiments on real world datasets in order to determine if the proposed method
works on practical inverse problems, our primary target being those coming from humanoid robotics
\cite{arechavaleta2008optimality,mombaur2010human}.

\section*{ACKNOWLEDGMENTS}

This work was partly funded by an award of the Simone and Cino del Duca foundation of Institut de France.
The authors would like to thank Fr\'ed\'eric Jean, Jean-Paul Laumond, Nicolas Mansard and Ulysse Serres for fruitful discussions.



\end{document}